\documentclass[12pt]{article}
\usepackage[margin=1in]{geometry}
\usepackage{graphicx}
\graphicspath{{Figures/}}
\usepackage[hidelinks]{hyperref}
\usepackage{subcaption}
\usepackage{float}
\usepackage{amssymb,amsmath,amsthm}
\usepackage{newtxtext,newtxmath}
\usepackage[dvipsnames]{xcolor}

\usepackage[backend=biber,style=numeric,maxbibnames=8]{biblatex}
\definecolor{addblue}{rgb}{0.1,0,0.8}

\usepackage[normalem]{ulem}

\definecolor{darkgrn}{rgb}{0,0.75,0}

\theoremstyle{plain} 
\newtheorem{theorem}{Theorem}[] 

\newtheorem{proposition}[theorem]{Proposition}
\newtheorem{corollary}[theorem]{Corollary}

\theoremstyle{definition} 
\newtheorem{definition}[theorem]{Definition}
\newtheorem{example}[theorem]{Example}

\theoremstyle{remark} 
\newtheorem*{remark}{Remark}

\newcommand{\R}{{\mathbb R}}

\newcommand{\B}{{\mathbb B}}
\newcommand{\C}{{\mathbb C}}

\title{\huge Minimal Homotopies in Three Dimensions:\\A Cable System Approach}
\author{Lia Buchbinder\thanks{lia.buchbinder@wsu.edu; corresponding author} \quad\quad Bala Krishnamoorthy \quad\quad Kevin R.~Vixie\\
Washington State University}
\date{} 

\begin{document}
\maketitle

\begin{abstract}
We study null homotopies of immersed spheres in $\R^3$ and the volume they sweep during contraction. 
For a smooth immersion with finitely many transverse self-intersections, we introduce a cable system that connects each bounded region of the complement to the exterior. 
From this construction we define the cable index and prove that it agrees with the Brouwer degree on each complementary region.
Using this identification, we derive a degree-weighted lower bound for the swept volume of any Lipschitz null homotopy. 
We show that the bound is attained whenever the homotopy is sense-preserving, meaning the surface moves in a consistent direction, and the index evolves monotonically along the homotopy.
In addition, in the case where the immersion arises as the boundary of an immersed ball, we construct an explicit homotopy that realizes this lower bound via a deformation of the ball.
Finally, we present a linear-time algorithm that computes all cable indices from a finite cable system, providing a concrete and computable method for evaluating the lower bound.
\end{abstract}

\section{Introduction}
We study null homotopies of immersed spheres in $\R^3$ and the volume they sweep during a contraction. 
A null homotopy of an immersed sphere $\iota(S^2)\subset \R^3$ is a continuous deformation that contracts the sphere to a point.  
We measure such a deformation by the \emph{swept volume} of the map $H:S^2\times[0,1]\to \R^3$, counted with multiplicity. 
Our goal is to estimate the smallest possible swept volume among all Lipschitz null homotopies.

This problem is the three-dimensional analogue of the planar \emph{homotopy area} problem for immersed curves.
But the generalization from $\R^2$ to $\R^3$ presents multiple challenges.
For planar curves with self-intersections, the intersections occur only at isolated points and the curve divides the plane into regions in a simple manner (see Figure \ref{fig:nullhom}).
In contrast, an immersed sphere in $\R^3$ with self-intersections may intersect itself along curves. 
Because of this, the complement of the surface has a more complicated structure and it becomes more challenging to track how the surface separates the regions.

The behavior of homotopies is also different in three dimensions. 
In the plane, one can consider homotopies that move the curve while preserving its orientation along the deformation. 
Such homotopies are called sense-preserving \cite{ChWa2019}.
This definition uses the fact that an oriented planar curve has a well-defined left and right side. 
Thus, one can require that the curve moves consistently to only one side during the deformation.
Extending this idea to the case of immersed spheres in $\R^3$ is less direct. 
A surface does not determine a single direction of motion in the same way. 
During a null homotopy, the intermediate surfaces may develop self-intersections, and different sheets of the surface may pass through each other. 
Because of this, describing a consistent direction of motion requires a more careful definition.

\begin{figure}[ht!]
    \centering
    \includegraphics[width=0.5\linewidth]{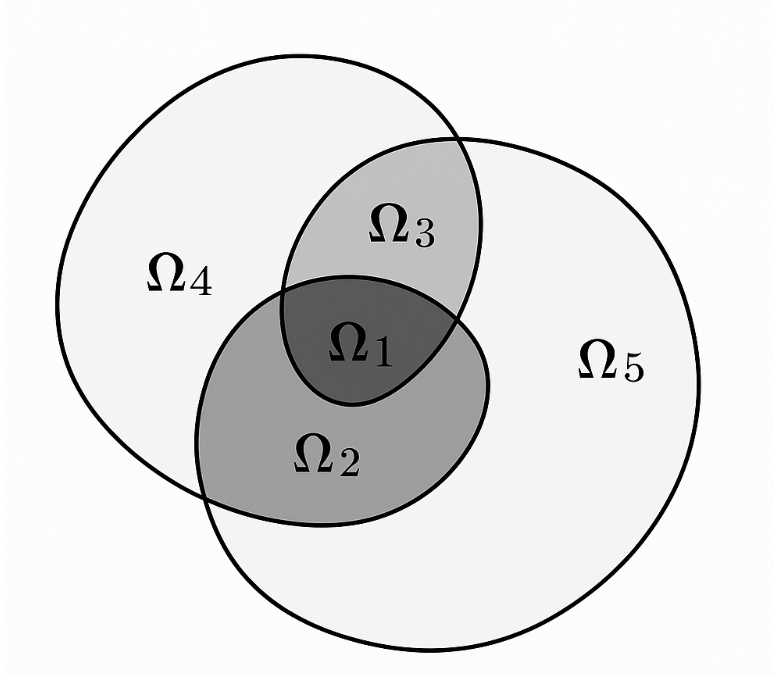}
    \caption{A nontrivial null homotopy of a closed immersed curve in the plane together with the regions $\Omega_1, \dots, \Omega_5$ determined by its image.
      The regions are swept with different multiplicities by a null homotopy of the curve:
      the central region $\Omega_1$ is swept three times, the adjacent regions $\Omega_2$ and $\Omega_3$ are each swept twice, and the outer regions $\Omega_4$ and $\Omega_5$ are swept once each.
      This example illustrates that even in the planar case, minimal swept area may involve nontrivial multiplicities.}
    \label{fig:nullhom}
\end{figure}

For planar curves, intersections appear or disappear through local moves \cite{FaKaWe2017}. 
These events are isolated and can be tracked locally. 
For surfaces in $\R^3$, intersections occur along curves, and during a homotopy these curves may appear, disappear, merge, or split.
Because of this, the structure of the complement may change during the deformation. 
Regions that were separated at one time may become adjacent later as sheets move past each other. 
As a result, tracking how the surface separates space and how many times each region is swept becomes subtler than in the planar case.

In his 1967 thesis, Blank introduced a geometric encoding for planar immersed curves based on cable systems and associated symbolic words \cite{Bl1967}. 
For a normal closed immersed curve $\gamma:S^1\to\R^2$, a \emph{cable system} for $\gamma$ consists of a collection of pairwise disjoint paths (cables) connecting each bounded face of the complement $\R^2\setminus\gamma(S^1)$ to a fixed exterior base point $p_0$.
 Traversing the curve once and recording in order the indices of the cables it crosses, yields a cyclic word.
Each crossing is assigned a sign determined by orientation. 
A crossing is positive if the curve meets the cable from right to left relative to the cable’s orientation, and negative otherwise.
The resulting cyclic word is called the \emph{Blank word} associated with the cable system. 
This word encodes both intersection data and orientation information of the curve.

A Blank word is said to be \emph{reduced} if it contains no two consecutive symbols corresponding to the same cable with opposite orientations. 
To avoid unnecessary backtracking intersections, one often imposes the \emph{shortest path assumption} where each cable is chosen so that it intersects the curve minimally among all smooth paths connecting its region to the base point. 
Under this condition, the associated Blank word is automatically reduced.
Although many different cable systems can be drawn for the same curve, the reduced Blank word is invariant under cable isotopy, that is, under smooth deformations of the cables that preserve their intersection pattern with the curve.

\subsection{Our Contributions}
We extend the cable system approach from planar curves to immersed spheres in $\R^3$. 
Such an extension is not obvious, since one can walk along the curve in the plane and assign signs according to how crossings are met. 
But this is not possible in three dimensions since even when self-intersections are transverse, they can form curves rather than isolated points, and there is no natural way to walk on the sphere and propagate orientations in the same manner. 
Instead, we show that the Blank cable system of an immersed curve in the plane can in effect be used to encode the degree associated with the connected components determined by an immersed sphere in $\R^3$. 

We introduce the cable index (Definition \ref{def:cableindx}) and show that it captures how the surface separates space. 
This quantity plays a central role in our work. 
In Proposition \ref{prop:cableindex}, we prove that the cable index of each region agrees with its Brouwer degree. 
In particular, the cable index is constant on each component and provides a direct geometric description of the degree.
Using this concept, we define the index function (Definition \ref{def:indxfun}) and the total degree (Definition \ref{def:totdeg}), and prove a lower bound on the swept volume of any Lipschitz null homotopy in Theorem \ref{thm:lowbound}.
This extends the planar theory of winding number and area to three dimensions. 
We also introduce a notion of sense-preserving homotopies for immersed spheres in $\R^3$, extending the classical planar concept to the three-dimensional setting (Definition \ref{prop:snspresrv}). 
In Proposition \ref{prop:minhom} we prove that such homotopies induce a monotone index function, and therefore attain the degree-based lower bound on the swept volume. 
In addition, in Proposition \ref{prop:minhomimrs}, we describe a second, more geometric approach in the case where the immersed sphere arises as the boundary of an immersed ball. 
In this setting, the lower bound is attained by a homotopy obtained by deforming the unit ball and then applying the extension map. 
This gives a direct interpretation of the swept volume through the area formula.

Our framework also applies to the planar case of curves, where the cable index reduces to the classical winding number. 

We provide a linear-time algorithm for computing the cable index, and hence the degree associated with each connected component determined by the surface (see Subsection \ref{subsec:algorithm}). 
Unlike the classical definition of degree, which is based on counting preimages or integrating over the surface, our method computes the degree directly from the cable system by walking along the cables and tracking how the index changes across regions. 
Existing methods assume a given map and compute its degree by subdividing the domain and evaluating the function on the boundary. 
In contrast, our method does not take a map as input. 
It computes the degree directly from the combinatorial structure of the cable system, associated with an immersed surface, and does not rely on geometric approximation or numerical integration and avoids subdivision and function evaluation. 
Unlike existing algorithms, our method runs in linear time and applies both to immersed curves in the plane and to immersed spheres in $\R^3$, providing a unified way to compute degree information in both settings. 
Moreover, the algorithm computes the degree associated with all regions, using a single representative point per region. 
This computation is exact and depends only on the combinatorial structure of the cable system.
 
\subsection{Related Work}

In the plane, Chambers and Wang defined the homotopy area between two simple homotopic curves and proved a sharp lower bound in terms of the winding numbers of the regions \cite{ChWa2013}.  
In particular, the planar lower bound becomes an equality when the homotopy is sense-preserving.
Nie \cite{Ne2014} further studied the minimum homotopy area of planar curves using algebraic constructions of the associated word. 
In particular, Nie showed how to compute the minimum homotopy area by applying dynamic programming to the combinatorial word representation, and related this approach to structures arising in geometric group theory.
Chang, Fasy, McCoy, and Millman \cite{ChFaMcMiWe2023} gave conditions under which a reduced Blank word is uniquely determined up to cyclic relabeling.
Building on this framework, they introduced the weighted cancellation norm, defined as the minimal total area of regions that remain after all possible combinatorial cancellations. 
During a null homotopy of a normal curve, opposite regions may cancel. 
They showed that this norm equals the minimal swept area of any null homotopy of a normal planar immersed curve.
They also provided an algorithm to compute this quantity. 
Starting from a valid cable system and the associated Blank word, their method uses dynamic programming to determine the weighted cancellation norm efficiently, and thus the minimal swept area.
They also showed that Blank’s and Nie’s word constructions agree under suitable assumptions and gave a geometric interpretation of Nie’s dynamic programming algorithm.

For surfaces, winding number type quantities have been studied in geometry processing. 
Jacobson, Kavan, and Sorkine-Hornung \cite{JaKaHo2013} introduced generalized winding numbers for triangle meshes in $\R^3$, computed using solid angle formulas.
Their method evaluates a winding number function over space.
The running time depends on both the number of mesh elements and the number of points where the function is evaluated. 
A direct computation takes linear time in the number of facets for each evaluation.
They also propose a hierarchical method that speeds up the computation in practice, although no explicit asymptotic bound is stated.
Barill et al.~\cite{BaDiScLeJa2018} extend this approach to triangle soups and point clouds in $\R^3$ where the winding number is evaluated from discrete geometric representations of the surface.
As in the previous setting, direct computation takes linear time in the number of triangles.
Their method improves this in practice, with evaluation times that show close to logarithmic behavior.
These approaches rely on geometric or numerical evaluation of the surface.

From the viewpoint of geometric measure theory, the existence of volume-minimizing fillings is guaranteed in the setting of integral currents \cite{Fe1969, Al2000, FeFl1960}. 
Related questions also appear in filling problems, such as Gromov's filling volume \cite{Gr1983}. 
These problems are static in nature, focusing on minimal fillings of a boundary, rather than the volume swept during a deformation.
White \cite{Wh1983} proved existence results for least-area mappings under embedding assumptions. 
In contrast, we study immersed spheres in $\R^3$, which may have self-intersections. 

Since we view the Brouwer degree as a generalization of the winding number in the plane, and compute it as part of finding the lower bound on the swept volume, we briefly mention existing methods for computing it.
We also mention algorithms for the swept area, since our approach applies to the planar setting as well.

Nie \cite{Ne2014} described a polynomial-time algorithm for computing the minimum homotopy area of planar curves based on dynamic programming over a combinatorial word representation. 
Fasy, Karako{\c c}, and Wenk \cite{FaKaWe2017} study minimum homotopy
area using a recursive decomposition of the curve. 
Their approach splits the curve at intersection points and breaks it into subcurves. 
In general, this leads to an exponential-time algorithm, since
all possible decompositions may need to be considered. 
Chambers and Wang compute the minimum homotopy area by working directly with the geometry of the curves \cite{ChWa2019}. 
Their algorithm splits the curves at intersection points and considers subcurves between intersections. 
For pairs of intersections, it defines subproblems and computes winding numbers on these pieces. 
The method uses dynamic programming, with running time that can be quadratic or close to linear, depending on the setting of the case.
Chang, Fasy, McCoy, Millman, and Wenk \cite{ChFaMcMiWe2023} compute the minimum homotopy area of planar curves using a combinatorial encoding. 
Their method represents the curve as a word derived from the cable system, where each region corresponds to a letter and opposite regions cancel. 
The problem is then reduced to that of finding an optimal sequence of cancellations. 
They use a dynamic programming algorithm over substrings of the word, which leads to a polynomial-time algorithm.

Early approaches for degree computation include the method for computing the Brouwer degree in the plane by approximating the image of a curve and counting crossings of a fixed direction and the work of O’Neil and Thomas \cite{OnTh1975}, which is based on approximating a multidimensional integral related to the degree. 
Stenger \cite{St1975} proposed a method based on a simplicial decomposition of the boundary, computing the degree from the signs of the function evaluated at the vertices.
Boult and Sikorski \cite{BoSi1986} developed a different method to calculate the degree and studied the complexity of computing the degree for Lipschitz functions, providing algorithm together with upper and lower bounds on the number of function evaluations. 
Aberth \cite{Ab1994} introduced a method based on interval arithmetic.
Nakamura and Murashige \cite{NaMu2006} proposed a different approach based on computational homology theory. 
More recently, Franek and Ratschan \cite{FrRa2015} developed an interval-based algorithm for computing the degree of a continuous function defined on a region that is a product of intervals.
Their method applies to functions given by arithmetic expressions and does not require differentiability or a known Lipschitz constant. 
It separates the computation into a numerical step and a combinatorial step that computes the degree. 
They also note that performance typically worsens as the dimension increases, while the method works well in low dimensions.

\section{Background and Definitions}
Our motivation for the present work comes from the planar setting.
In that setting, the minimal homotopy area problem can be studied by analyzing how the curve separates the regions of the plane.  
This suggests that the topological information controlling homotopy area can be recovered from a discrete encoding of the structure of the complement.  
In the plane, the relevant topological quantity is the winding number of the curve around a point.

\begin{definition} \label{def:wndnum}
Let $\gamma : S^1 \to \C^2$ be an $C^1$ oriented closed curve and let $x \notin \gamma(S^1)$.
The winding number of $\gamma$ around $x$ measures how many times the curve winds around the point $x$ \cite{De1985}.
It can be defined by
\[
\mathrm{w}(\gamma,x)
=
\frac{1}{2\pi i}
\int_{\gamma}
\frac{dz}{z-x}.
\]
Equivalently, the winding number counts the number of times the direction from $x$ to $\gamma(t)$ rotates as $t$ travels once around the curve, with sign determined by the orientation of the curve.
\end{definition}

A natural candidate for a three-dimensional analogue is the degree of an associated map, a notion that was introduced by  Brouwer \cite{Br1911}.

\begin{definition} \label{def:Brdeg}
Let $\Omega \subset \R^n$ be a bounded open set and let 
$f : \overline{\Omega} \to \R^n$ be continuous. 
If $y \notin f(\partial \Omega)$, the \emph{Brouwer degree} of $f$ with respect to $\Omega$ and $y$, denoted 
$\deg(f,\Omega,y)$, is an integer that measures the algebraic number of preimages of $y$ under $f$ \cite{De1985}. 
Intuitively, the degree records how many times the map $f$ covers the point $y$, counting multiplicity and orientation.

More precisely, when $f$ is differentiable and $y$ is a regular value, the degree can be computed as
\[
\deg(f,\Omega,y) = \sum_{x \in f^{-1}(y)} \operatorname{sign}(\det Df(x)),
\]
where $Df(x)$ is the Jacobian matrix of partial derivatives of $f$ at $x$, and $\det Df(x)$ is its determinant. 
The sign of this determinant records whether the map locally preserves or reverses orientation at the point $x$.
\end{definition}

The Brouwer degree is invariant under homotopies that avoid the boundary, and it satisfies an additivity property with respect to decompositions of the domain. 
In particular, if $\Omega_1$ and $\Omega_2$ are disjoint open subsets of $\Omega$ and 
$y \notin f(\overline{\Omega} \setminus (\Omega_1 \cup \Omega_2))$, then
\[
\deg(f,\Omega,y) = \deg(f,\Omega_1,y) + \deg(f,\Omega_2,y).
\]
These properties make the degree a natural tool for studying how a map wraps a domain around a point.

Throughout this paper, we consider a smooth immersion
$\iota : S^2 \to \R^3$, that is, the differential $d\iota_p$ is injective for every $p\in S^2$ \cite{Le2013}. 
We also assume that $\iota$ extends to a Lipschitz map  $f : B^3 \to \R^3$ that preserves orientation of the ball $B^3$ and agrees with $\iota$ on its boundary.

Locally, $\iota(S^2)$ is a smooth embedded surface, although global self-intersections may occur. 
This means that whenever two sheets of the surface meet at a point $x \in \R^3$, their tangent planes together span $\R^3$ \cite{Le2013}. 
We assume that all self-intersections are transverse. Away from these curves, the image is a smooth embedded surface. 
In $\R^3$, transverse self-intersections occur along smooth curves, and triple intersections, when present, are isolated points. 
In addition, we assume that there are only finitely many self-intersection curves and triple intersection points.

\begin{definition} \label{def:hom}
A \emph{homotopy} is a family of maps $h_t : X \to Y$, indexed by $t \in [0,1]$, such that the associated map
$H : X \times [0,1] \to Y, \ H(x,t)=h_t(x)$ 
is continuous. 
The maps $h_0$ and $h_1$ are called the initial and terminal maps. 
Two maps $f,g : X \to Y$ are said to be homotopic if there exists a homotopy $h_t$ with $h_0=f$ and $h_1=g$.
A \emph{null homotopy} is a continuous map where $h_1(x)$ is the constant map \cite{Hu1959}.

A null homotopy of an immersed sphere $\iota : S^2 \to \R^3$ is a map
\[
H : S^2 \times [0,1] \to \R^3, 
\qquad H(p,t) = h_t(p),
\]
such that $h_0=\iota$ and $h_1$ is constant. Geometrically, it represents a continuous contraction of $\iota(S^2)$ to a point.
\end{definition}

Throughout this work we restrict to Lipschitz null homotopies. We also assume that each point $x \in \R^3$ except the terminal point, is covered only finitely many times during the homotopy, meaning that for every $x \in \R^3$, the preimage
\[
H^{-1}(x)=\{(p,t)\in S^2\times[0,1]: H(p,t)=x\}
\]
is finite. 

\begin{definition}
A homotopy $H$ is \emph{Lipschitz} \cite{EvGa2015}
if there exists a constant $L>0$ such that
\[
|H(x,t)-H(y,s)| \le L\, |(x,t)-(y,s)|
\]
for all $(x,t),(y,s)\in S^2\times[0,1]$ .
\end{definition}

If $H$ is Lipschitz, we define the \emph{swept volume} of $H$ using the area formula \cite{Fe1969}
\[
\operatorname{Vol}(H)
= \int_{S^2 \times [0,1]} JH(x,t)\, dx\, dt,
\]
where $JH$ denotes the $3$-dimensional Jacobian of $H$.
In particular, the volume accounts for how many times each point is covered.

We collect notation used in this paper in Table 1.

\begin{table}[ht!] 
  \centering
  \caption{Notation used throughout the paper.}
\begin{tabular}{ll}
\hline
\textbf{Notation} & \textbf{Description} \\
\hline

$\mathcal{H}^k$ & $k$-dimensional Hausdorff measure \\

$S^2$ & Unit sphere in $\R^3$ \\

$B^3$ & Unit ball in $\R^3$ \\

$\iota : S^2 \to \R^3$ & Smooth immersed sphere \\

$\Sigma = \iota(S^2)$ & Image of the immersed sphere \\

$f : B^3 \to \R^3$ & Lipschitz extension of $\iota$ to the ball \\

$\Omega_\infty$ & Unbounded component of $\R^3 \setminus \Sigma$ \\

$\Omega_i$ & Bounded components of $\R^3 \setminus \Sigma$ \\

$p_i \in \Omega_i$ & Interior reference point in $\Omega_i$ \\

$p_\infty \in \Omega_\infty$ & Exterior base point \\

$\pi_i : [0,1] \to \R^3$ & Oriented cable from $p_i$ to $p_\infty$ \\

$\dot{\pi}_i(q)$ & Tangent vector of $\pi_i$ at crossing $q$ \\

$\nu(q)$ & Oriented unit normal to $\Sigma$ at $q$ \\

$\operatorname{sgn}(q)$ & Crossing sign $\mathrm{sign}\langle \dot{\pi}_i(q), \nu(q)\rangle$ \\

$C(\Omega_i)$ & Cable index of region $\Omega_i$ \\

$\operatorname{ind}_\Sigma(x)$ & Index function on $\mathbb{R}^3 \setminus \Sigma$ \\

$D(\Sigma)$ & Total degree: $\sum_i C(\Omega_i)\,\mathrm{Vol}(\Omega_i)$ \\

$V_{\deg}(\iota)$ & Degree-weighted volume \\

$H : S^2 \times [0,1] \to \R^3$ & Lipschitz null-homotopy \\

$\Sigma_t$ & Intermediate surface $H_t(S^2)$ \\

$\mathrm{Vol}(H)$ & Swept $3$-volume of $H$ \\

$N(x,H)$ & Number of preimages of $x$ under $H$ \\

$\xi_t(p)$ & Orientation of $h_t$ at $H(p,t)=x$ \\

$W = [w_1,\dots,w_N]$ & Cable-word \\

$(\Omega_a,\Omega_b)$ & Symbol for oriented crossing from $\Omega_a$ to $\Omega_b$ \\

$n_i$ & Reduced coefficient equal to $C(\Omega_i)$ \\

\hline
\end{tabular}
\end{table}

\section{Main Results}
In this section we introduce the cable-system construction for immersed spheres and establish its connection with the Brouwer degree. 
We then derive a lower bound for the swept volume of Lipschitz null homotopies and show that under a monotonicity condition on the index functions, this lower bound is attained.
In the end, we describe a linear-time algorithm for computing the associated indices.

\subsection{Construction of Cable Systems for Immersed Spheres}
We begin with the topology of the complement.

Let $\iota : S^2 \to \R^3$ be a smooth immersion with finitely many transverse self-intersections, and set $\Sigma=\iota(S^2)$. 
Since $S^2$ is compact, its image $\Sigma$ is compact in $\R^3$, and therefore the complement $\R^3 \setminus \Sigma$ is open and contains an unbounded component.
Locally, near each point of $\Sigma$, the surface consists of finitely many smooth sheets meeting transversely. 
In a sufficiently small neighborhood, these sheets divide space into finitely many regions. 
Because there are only finitely many self-intersection curves and triple points, this local finiteness extends globally. 
Hence, the complement decomposes into finitely many open, path-connected regions. 
Exactly one of these is unbounded; we denote it by $\Omega_\infty$, and we write the bounded regions as $\Omega_1,\dots,\Omega_N$.

We now adapt Blank’s cable system construction to immersed spheres in $\R^3$.
In the planar case, one traverses the curve and records the cables it crosses, producing a cyclic word. 
This works because a curve is one-dimensional and walking once around it captures all intersections in order.
For an immersed sphere, this approach is no longer available. 
The surface is two-dimensional, and there is no canonical way to “walk” on it and record crossings globally. 
Self-intersections occur along curves rather than isolated points, and the planar combinatorial encoding does not directly generalize.
Instead, we reverse the viewpoint. 
Rather than walking along the surface, we consider the cables themselves. 
Each bounded region is connected to the exterior by a cable, and we record how this cable crosses the immersed sphere. 
The orientation of the sphere determines a unit normal on its regular part, and this allows us to assign a sign to each crossing of a cable with the surface.
Thus, to each region we associate an integer obtained by summing the signed crossings of its cable. 
Unlike the planar case, this construction does not produce a cyclic word. 
Instead, it produces a collection of integers, one for each region, which will serve as the basis for our results.

Let $\Sigma=\iota(S^2)$ and let 
\[
\R^3 \setminus \Sigma
=
\Omega_\infty \;\cup\; \Omega_1 \;\cup \cdots \cup\; \Omega_N
\]
be the decomposition of the complement.
For each bounded region $\Omega_i$, choose an interior point $p_i \in \Omega_i$, and fix a point $p_\infty \in \Omega_\infty$.
A \emph{cable} associated to $\Omega_i$ is a smooth simple path
\[
\pi_i : [0,1] \to \R^3
\]
such that
\[
\pi_i(0)=p_i, 
\qquad
\pi_i(1)=p_\infty,
\]
and $\pi_i$ intersects $\Sigma$ transversely, only at regular points, and finitely many times.
We orient each cable $\pi_i$ from $p_i$ toward $p_\infty$. 

The domain $S^2$ carries its standard orientation. 
On the regular part of the immersed surface $\Sigma$, this induces a smooth unit normal vector field $\nu$.
At a transverse self-intersection point, the image consists locally of two smooth sheets meeting along a curve. 
Each sheet inherits its orientation from the domain sphere and therefore has its own consistently defined normal vector. 

Let $q \in \pi_i \cap \Sigma$ be a transverse intersection point.
Let $\dot{\pi}_i(q)$ denote the tangent vector to the cable at $q$, and let $\nu(q)$ be the oriented unit normal to $\Sigma$ at $q$.
We define
\[
\operatorname{sgn}(q)
=
\operatorname{sign}\langle \dot{\pi}_i(q), \nu(q) \rangle
\in \{+1,-1\}.
\]

\begin{figure}[ht!]
    \centering
    \includegraphics[width=0.4\textwidth]{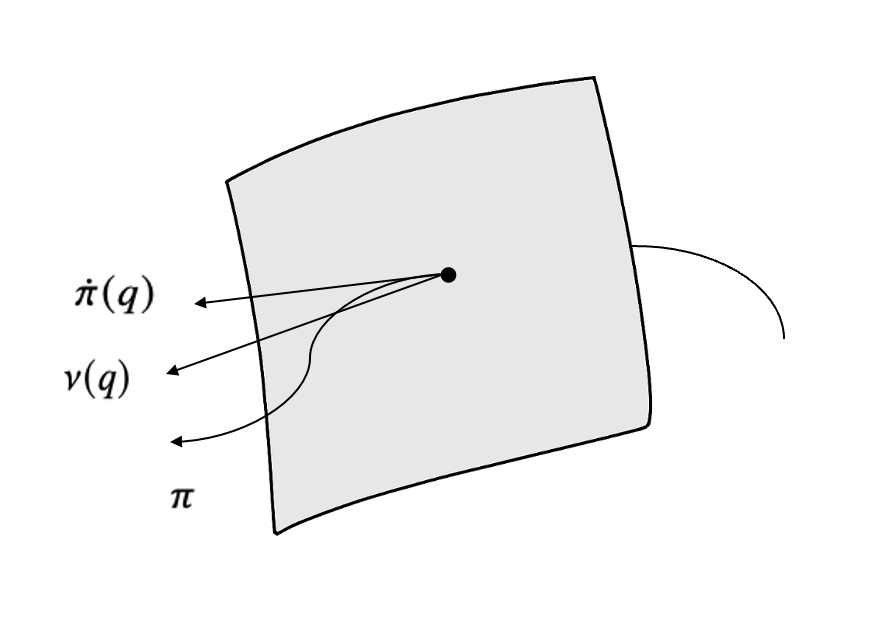}
    \caption{Local geometry at an intersection point $q \in \pi \cap \Sigma$.}
    \label{fig:local_intersection}
\end{figure}
Since we require that each cable intersects $\Sigma$ only at regular points of the immersion, this ensures that the oriented unit normal vector is uniquely defined at every crossing.

For a region $\Omega_i$, and its interior point, define the \emph{cable index} with respect to a point $p_i$ as
\[
C(p_i,\Omega_i)
=
\sum_{q \in \pi_i \cap \Sigma}
\operatorname{sgn}(q).
\]
This integer records the algebraic number of times the cable crosses the immersed sphere.

The family of oriented paths $\{\pi_i\}_{i=1}^N$ defines a \emph{cable system} for the immersed sphere.

\begin{example}
  Cable $\pi_7$ from \autoref{fig:cable_system} begins in $\Omega_7$, crosses the wall separating $\Omega_7$ from $\Omega_3$,
  goes out to $\Omega_{\infty}$,
  crosses into $\Omega_3$ again, and eventually exits.  
  The signed index contributions are 
  $
  +1 \text{(entering $\Omega_3$)},
  +1 \text{(entering $\Omega_{\infty}$)},
  -1 \text{(re-entering $\Omega_3$)},\,\text{and}\,
  +1 \text{(final exit)},
  $
  giving
  $
  \mathrm{C}({p}_7,\Omega_7)=2.
  $
\end{example}

\begin{figure}[ht!]
    \centering
    \includegraphics[width=0.9\textwidth]{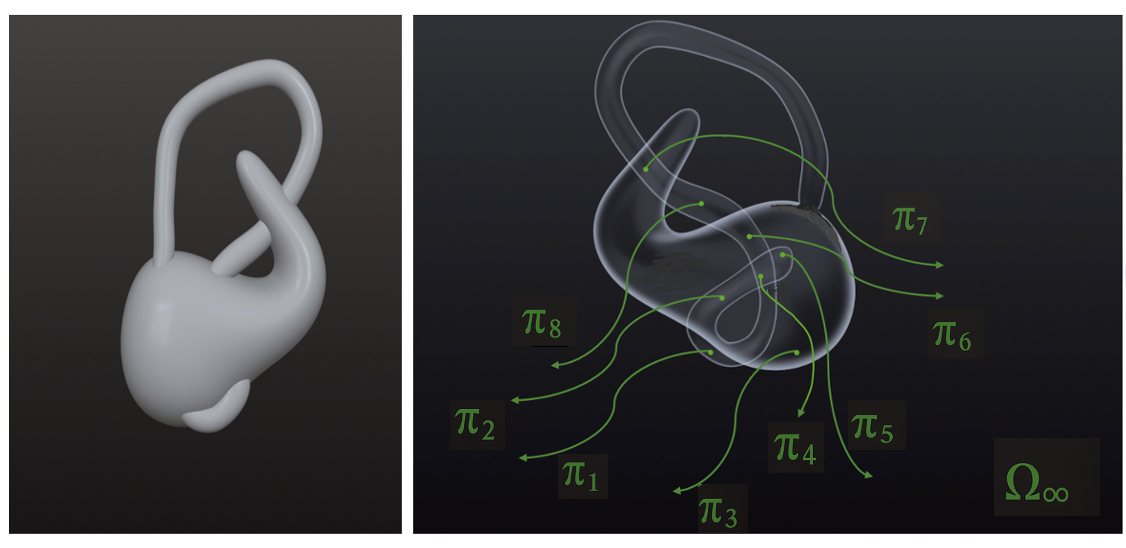}
    \caption{An immersed $2$-sphere in $R^3$ with eight bounded regions, cables $\pi_1,\dots,\pi_8$, and exterior region $\Omega_{\infty}$.
    The sphere begins as a standard round sphere, but at one place the surface is smoothly stretched outward and then stopped.  
    At a different location, the sphere is stretched again, this time forming a long smooth tube that passes through the first stretched region, creating a transverse self-intersection.  
    The tube continues inward, curves around inside the sphere, cuts through the surface a second time, and then terminates smoothly. }
    \label{fig:cable_system}
\end{figure}

\subsection{The Cable Index and the Brouwer Degree}
We now relate the cable index to the Brouwer degree. 

\begin{definition} \label{def:cableindx}
Let $\Sigma \subset \R^3$ be an oriented immersed sphere, and let $\Omega$ be a connected component of $\R^3 \setminus \Sigma$, and let $y \in \Omega$. 
Choose a cable $\pi : [0,1] \to \R^3$ from $y$ to the exterior base point $p_\infty$. 
The \emph{cable index} of $\Omega$ is defined by
\[
C(\Omega) \equiv \sum_{q \in \pi \cap \Sigma} 
\operatorname{sign}\langle \dot{\pi}(q), \nu(q) \rangle,
\]
where $\nu(q)$ is the oriented unit normal to $\Sigma$ at the regular intersection point $q$.
\end{definition}

\begin{proposition} \label{prop:cableindex}

Let $B^3$ be a closed $3$-ball with $\partial B^3 = S^2$, equipped with its standard orientation, and let
\[
f : B^3 \to \R^3
\]
be a $C^1$ map whose restriction to $\partial B^3$ agrees with the immersion $\iota$ and which preserves orientation of the ball.

Then the cable index of $\Omega$ equals the Brouwer degree of $f$ at $y$, that is,
\[
C(\Omega) = d(f, B^3, y).
\]
In particular, $C(\Omega)$ is constant on $\Omega$.
\end{proposition}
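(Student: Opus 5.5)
The plan is to show that the function $y \mapsto d(f,B^3,y)$ on $\R^3\setminus\Sigma$ changes exactly as the signed crossing count does when one crosses $\Sigma$ at a regular point. Then we integrate this jump relation along the cable $\pi$ from $y$ to $p_\infty$.

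Three standard facts about the Brouwer degree set this up. First, $d(f,B^3,\cdot)$ is locally constant on $\R^3\setminus f(\partial B^3)=\R^3\setminus\Sigma$, by homotopy invariance: translate the target point along a path that avoids $\Sigma$. Hence it is constant on each component $\Omega$. Second, $f(B^3)$ is compact, so $d(f,B^3,p_\infty)=0$ once we move $p_\infty$ (within $\Omega_\infty$) outside $f(B^3)$. Third, it suffices to prove the \emph{jump formula}: if $q\in\Sigma$ is a regular point of the immersion hit by exactly one sheet, and $y_-,y_+$ are nearby points with $y_+-y_-$ parallel to $\nu(q)$ and pointing in its direction, then
\[
d(f,B^3,y_+)-d(f,B^3,y_-) = -1 .
\]
The sign convention is that $\nu$ is the outward normal induced by the orientation of $S^2=\partial B^3$. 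With this, crossing along $\nu$ means passing from "inside" to "outside" the local sheet.

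Granting the jump formula, subdivide $\pi$ at its finitely many transverse crossings $q_1,\dots,q_k$. Between consecutive crossings the degree is constant. At each $q_j$ the degree changes by $-\operatorname{sgn}(q_j)$, since $\dot\pi(q_j)$ is either along $\nu$ or against it. Telescoping from $y$ to $p_\infty$ gives
\[
0-d(f,B^3,y)=-\sum_j\operatorname{sgn}(q_j),
\]
that is, $d(f,B^3,y)=C(\Omega)$. This also shows that $C(\Omega)$ is independent of the choice of cable and of $y\in\Omega$, which proves the final claim. One caveat: cables are required to meet $\Sigma$ only at regular points. I read this as points with a single preimage, and I would state that convention explicitly.

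The main obstacle is proving the jump formula. I would argue with a boundary collar. Let $x_0=\iota^{-1}(q)$. Choose a small ball $U\ni q$ such that $f^{-1}(U)\cap\partial B^3$ is a small disc around $x_0$ and $\Sigma\cap U$ is a single smooth sheet. Then choose $y_\pm\in U$ that are regular values of $f$; since the degree is locally constant, this changes nothing. Decompose $B^3$ into a thin collar neighbourhood $K$ of $x_0$ and its complement, and use additivity of the degree. Points of $f^{-1}(y_\pm)$ away from $K$ lie in a compact set on which the segment $[y_-,y_+]$ is never hit by $f(\partial(\cdot))$, so their contribution is the same for $y_-$ and $y_+$. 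Near $x_0$, note that $df_{x_0}$ restricted to $T_{x_0}S^2$ is injective. Orientation preservation, which I interpret as $\det Df>0$ near the boundary, makes $f$ a local diffeomorphism on a half-ball $K$ around $x_0$. It maps $K$ onto the side of the sheet opposite to $\nu$, because the outward normal of the ball is sent to the $\nu$ side. So $y_-$ has exactly one preimage in $K$, with sign $+1$, and $y_+$ has none. This gives the jump of $-1$.

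The delicate point is making "orientation preserving" imply a nondegenerate Jacobian near $\partial B^3$. If the map is only $C^1$ with $\det Df\ge 0$, I would first perturb $f$ near the boundary, rel $\partial B^3$, into a local diffeomorphism on a collar. This is possible because $\iota$ is an immersion, and the degree is unchanged because the perturbation is a homotopy fixing the boundary.
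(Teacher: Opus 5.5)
Your proposal is correct and follows the same route as the paper: the degree $d(f,B^3,\cdot)$ jumps by $\operatorname{sgn}(q_j)$ at each transverse crossing, and these jumps are telescoped along the cable to $\Omega_\infty$, where the degree vanishes. The paper only asserts the jump formula, so your half-ball localization via additivity fills that gap; it relies on your observation that the degree depends only on $f|_{\partial B^3}$, which lets you assume $f$ is a local diffeomorphism near $x_0$.
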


\begin{proof}
Fix $y\in \Omega$ and choose a cable $\pi:[0,1]\to\R^3$ with $\pi(0)=y$ and $\pi(1)=p_\infty\in\Omega_\infty$. 
After a small perturbation, we may assume that $\pi$ intersects $\Sigma=f(\partial B^3)$ transversely, only at regular points of $\iota$, and at distinct parameters
\[
0<t_1<\cdots<t_k<1,
\qquad q_j=\pi(t_j)\in\Sigma .
\]

Set $y_t=\pi(t)$. Since $y_t\notin f(\partial B^3)$ for $t\neq t_j$, the degree $d(f,B^3,y_t)$ is defined and is constant on each interval
$(0,t_1)$, $(t_1,t_2)$, \dots, $(t_k,1)$.

At a crossing time $t_j$, the path $y_t$ passes through the oriented surface $\Sigma$ transversely at $q_j$. The degree changes by $\pm 1$ when crossing $f(\partial B^3)$, with sign determined by the direction of crossing relative to the oriented normal. 
For $\varepsilon>0$ small, at a crossing $q_j$, let $y_{j-\varepsilon}$ and $y_{j+\varepsilon}$ be points
immediately before and after the crossing along $\pi$.
So, they lie in the two regions adjacent to $\Sigma$ at $q_j$.
Then
\[
d(f,B^3,y_{j-\varepsilon})-d(f,B^3,y_{j+\varepsilon})
=
\operatorname{sgn}(q_j).
\]
For $t$ near $1$, the point $y_t$ is in the exterior region, so $d(f,B^3,y_t)=0$. 
Summing the jumps along $\pi$ from $t=0$ to $t=1$ gives
\[
d(f,B^3,p_\infty)
=
d(f,B^3,y)
-
\sum_{j=1}^k \operatorname{sgn}(q_j).
\]
Since $d(f,B^3,p_\infty)=0$, we obtain
\[
d(f,B^3,y)
=
\sum_{j=1}^k \operatorname{sgn}(q_j)
=
C(\Omega).
\]
Since the Brouwer degree is constant on each connected component of $\R^3\setminus f(\partial B^3)=\R^3\setminus\Sigma$, the same value holds for every $y\in\Omega$. Hence $C(\Omega)$ is constant on $\Omega$.
\end{proof}

\subsection{The Lower Bound Theorem}  

\subsubsection{Proof via the Cable Index} 
\label{subsec:cablindxproof}
Chambers and Wang established that for two simple open homotopic curves, the swept area of any homotopy is bounded below by the total winding number area \cite{ChWa2019}.
We extend this principle to immersed spheres in $\R^3$.

In the three-dimensional setting, the winding number of planar regions is replaced by the cable index (equivalently, the Brouwer degree) associated with each connected component of the complement.
This degree counts, with sign, how many times the immersed sphere wraps around a point in space.

\begin{definition} \label{def:indxfun}
For $x \in \R^3 \setminus \Sigma$,
let $\Omega$ be the connected component containing $x$.
We define the index function
\[
\operatorname{ind}_\Sigma(x) \equiv C(\Omega),
\]
where $C(\Omega)$ is the cable index of the region. 
\end{definition}

The above definition is well defined.
Since the Brouwer degree is constant on each connected component of
$\R^3 \setminus \Sigma$,
the value does not depend on the choice of point or cable.
Thus, $\operatorname{ind}_\Sigma(x)$ depends only on the region containing $x$.
The function $\operatorname{ind}_\Sigma$ is undefined on $\Sigma$ itself,
since the Brouwer degree is defined only away from the boundary image.

\begin{definition} \label{def:totdeg}
We now define the total degree (or degree-weighted volume) of $\Sigma$ by
\[
\mathcal{D}(\Sigma)
=
\int_{\R^3}
\operatorname{ind}_\Sigma(x)\, d\mathcal{H}^3(x)
=
\sum_{\Omega}
\operatorname{ind}_\Sigma(\Omega)\,
\operatorname{Vol}(\Omega),
\]
where the sum runs over all connected components
$\Omega \subset \R^3 \setminus \Sigma$.
\end{definition}

This quantity measures the total signed volume enclosed
by the immersed sphere, counted with multiplicity.
It is the three-dimensional analogue of the total winding number area for immersed curves in the plane.

\begin{theorem} \label{thm:lowbound}
Let $\iota : S^2 \to \R^3$ be a smooth immersion with finitely many pairwise transverse self-intersections, and let 
\[
H : S^2 \times [0,1] \to \R^3
\]
be a Lipschitz null homotopy with $H_0=\iota$ and $H_1$ constant. 
Then
\[
\operatorname{Vol}(H)\ \ge\ \bigl|\mathcal{D}(\Sigma)\bigr|,
\]
where $\mathcal{D}(\Sigma)$ is the total degree of the immersed sphere.
\end{theorem}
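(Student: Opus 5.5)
The plan is to write the total degree as a signed count of preimages of the homotopy $H$ and then compare it with the unsigned count, which is the swept volume. Regard $H$ as a Lipschitz map on the oriented $3$-manifold $M=S^2\times[0,1]$, with boundary $\partial M = S^2\times\{1\} - S^2\times\{0\}$ up to orientation convention. Since $H_1$ is constant, the top face maps to a single point $c$, which has measure zero. Hence for a.e.\ $x\in\R^3\setminus\Sigma$, $x\neq c$, the map $H|_{\partial M}$ avoids $x$ except through the bottom face, where it equals $\iota$. So the degree $\deg(H,M,x)$ is defined for a.e.\ $x$.

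The first key step is to show that $\deg(H,M,x)=\pm\operatorname{ind}_\Sigma(x)$, with a fixed sign independent of $x$. I would do this by gluing. Take the orientation-preserving extension $f:B^3\to\R^3$ assumed in the paper and form the map $F$ on $B^3\cup_{S^2\times\{0\}} (S^2\times[0,1])$. This domain is a ball whose outer boundary sphere is collapsed to $c$ by $H_1$. Because the outer sphere maps to a point, the glued map factors through a map $\widehat F: S^3\to\R^3$, or more concretely through a map from a closed $3$-ball whose boundary goes to $c$. The degree of such a map at any $x\neq c$ is zero, since it equals the degree of a map from a closed manifold into $\R^3$, which is non-surjective. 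Additivity of the Brouwer degree, as recalled in the background, then gives $\deg(f,B^3,x)+\deg(H,M,x)=0$ for a.e.\ $x$, with orientations matched along the gluing face. Combined with Proposition~\ref{prop:cableindex}, this yields $\deg(H,M,x)=-C(\Omega)=-\operatorname{ind}_\Sigma(x)$.

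\textbf{Main obstacle.} The technical part is making the degree theory work for merely Lipschitz maps, and keeping track of orientations on the glued boundary. For the Lipschitz issue I would smooth $H$ by convolution, keeping it fixed near the bottom face, and use homotopy invariance of the degree away from $\Sigma\cup\{c\}$. Alternatively, I would invoke the Lipschitz form of the degree: for a.e.\ $x$, every preimage is a point of differentiability with nonzero Jacobian, and the degree is $\sum_{H(p,t)=x}\operatorname{sign}\det DH(p,t)$. The standing finiteness assumption on $H^{-1}(x)$ makes this sum finite.

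Given that identity, the estimate follows from the area formula:
\[
\operatorname{Vol}(H)=\int_{\R^3} N(x,H)\,dx \ \ge\ \int_{\R^3}\Bigl|\sum_{H(p,t)=x}\operatorname{sign}\det DH\Bigr|\,dx=\int_{\R^3}|\operatorname{ind}_\Sigma(x)|\,dx\ \ge\ \bigl|\mathcal D(\Sigma)\bigr|.
\]
The set $\Sigma\cup\{c\}$ has $\mathcal H^3$-measure zero, so it can be ignored in all of these integrals.
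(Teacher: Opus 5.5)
Your proof is correct, but it takes a different route from the paper's.

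\textbf{The paper's route.} It fixes $x$ and follows the time-dependent index $F(t)=\operatorname{ind}_{\Sigma_t}(x)$. This function is piecewise constant with $F(1)=0$, and each preimage $(p,t_0)$ of $x$ is said to change it by at most one unit. Hence $|\operatorname{ind}_\Sigma(x)|\le\operatorname{Var}(F)\le N(x,H)$, and the area formula finishes the proof.

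\textbf{Your route.} You compute the degree of $H$ on the whole cylinder $S^2\times[0,1]$ at once. You identify it with $\mp\operatorname{ind}_\Sigma(x)$ by gluing to an extension over $B^3$: the glued map has constant boundary value $c$, so by boundary dependence its degree is $0$ at every $x\ne c$. You then get the same pointwise inequality from the signed count $\sum\operatorname{sign}\det DH$ and the triangle inequality.

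\textbf{Comparison.} The paper's argument is a time-sliced version of yours: the jump of $F$ at $t_0$ is, up to sign, the sum of the local degrees of $H$ at the preimages with time coordinate $t_0$. Its dynamic form is what sets up the monotonicity criterion of Proposition~\ref{prop:minhom}. Your global version is more rigorous at three points the paper passes over.
\begin{itemize}
\item You never need an index for the intermediate surfaces $\Sigma_t$. These are in general not immersions, so the cable index is not even defined there.
\item You avoid the claim that every preimage contributes at most one unit, which fails at degenerate preimages. A cone over a degree-$k$ self-map of $S^2$ gives a Lipschitz map with a single isolated preimage of local degree $k$. You use instead the correct almost-everywhere fact: all preimages are points of differentiability with $JH\neq 0$, so each has local degree $\pm1$.
\item You do not need $H^{-1}(x)$ to be finite for every $x$, since $\int N(x,H)\,dx=\operatorname{Vol}(H)<\infty$ already gives finiteness almost everywhere.
\end{itemize}

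\textbf{Two small remarks.} It is the a.e.\ regularity of preimages, not the convolution-smoothing alternative you mention, that delivers $N(x,H)\ge|\deg(H,M,x)|$. So commit to the Lipschitz formula. Also, since $\deg(f,B^3,x)$ depends only on $\iota$, you may take $f$ smooth, which matches the $C^1$ hypothesis of Proposition~\ref{prop:cableindex}.
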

\begin{proof}
Fix $x\in\R^3$. For each $t\in[0,1]$ with $x\notin \Sigma_t$ set
\[
F(t)\equiv \operatorname{ind}_{\Sigma_t}(x).
\]
By assumption, the set
\[
T_x \equiv \{t\in[0,1]: x\in \Sigma_t\}
\]
is finite, so $F$ is defined on $[0,1]\setminus T_x$.

On each connected component of $[0,1]\setminus T_x$ the point $x$ stays in a fixed region of $\R^3\setminus \Sigma_t$. Hence $\operatorname{ind}_{\Sigma_t}(x)$ does not change there, and $F$ is constant on each such interval. 
Therefore, $F$ is piecewise constant with finitely many jump times.
Each time $t_0\in T_x$ corresponds to the surface passing through $x$ where several sheets of the surface may pass through $x$.
Each preimage $p \in S^2$ with $H(p,t_0)=x$ contributes at most one unit to the change of the index. 
In particular $F$ has bounded variation \cite{EvGa2015} and
therefore,
\[
|F(t_0^+) - F(t_0^-)| \le \#\{p \in S^2 : H(p,t_0)=x\}.
\]
where denote by $F(t_0^-)$ and $F(t_0^+)$ the left and right limits of $F$ at $t_0$.

Summing over all such times gives
\[
\operatorname{Var}(F) \le N(x,H).
\]
where
\[
N(x,H) \equiv \#\{(p,t)\in S^2\times[0,1]: H(p,t)=x\}
\]
counts the preimages of $x$ under $H$.

Since $H_1$ is constant, we have $F(1)=0$. 
Also $F(0)=\operatorname{ind}_{\Sigma_0}(x)$. 
Hence
\[
|\operatorname{ind}_{\Sigma}(x)|=|F(0)-F(1)|\le \operatorname{Var}(F)\le N(x,H).
\]
Integrating over $\R^3$ gives
\[
\int_{\R^3} |\operatorname{ind}_{\Sigma}(x)|\,dx
\ \le\
\int_{\R^3} N(x,H)\,dx.
\]
By the area formula, the right-hand side equals $\operatorname{Vol}(H)$. Therefore
\[
\operatorname{Vol}(H)\ \ge\ \int_{\R^3} |\operatorname{ind}_{\Sigma}(x)|\,dx
\ \ge\ \Bigl|\int_{\R^3} \operatorname{ind}_{\Sigma}(x)\,dx\Bigr|
=|\mathcal{D}(\Sigma)|. \qedhere
\]
\end{proof}

\begin{corollary}
For every admissible Lipschitz null homotopy $H$ of $\Sigma$,
$\operatorname{Vol}(H)\ge |\mathcal{D}(\Sigma)|$.
Hence,
\[
\inf_H \operatorname{Vol}(H)\ge |\mathcal{D}(\Sigma)|.
\]
\end{corollary}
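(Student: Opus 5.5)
The corollary follows directly from Theorem~\ref{thm:lowbound}. I would first fix what ``admissible'' means: a Lipschitz null homotopy $H:S^2\times[0,1]\to\R^3$ with $H_0=\iota$ and $H_1$ constant, such that every point $x\in\R^3$ other than the terminal point has finite preimage $H^{-1}(x)$. These are exactly the standing hypotheses under which Theorem~\ref{thm:lowbound} was proved. The finiteness of $H^{-1}(x)$ is what makes the jump set $T_x$ finite and the counting function $N(x,H)$ finite.

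The first claim is therefore just Theorem~\ref{thm:lowbound} applied to an arbitrary admissible $H$. One point needs checking: the exceptional terminal point is a single point, so it has $\mathcal{H}^3$-measure zero. Hence the integration step $\int_{\R^3}|\operatorname{ind}_\Sigma|\le\int_{\R^3}N(x,H)\,dx$ in that proof is unaffected. Likewise $\Sigma$ itself is $\mathcal{H}^3$-null, being the Lipschitz image of a $2$-dimensional set, so $\operatorname{ind}_\Sigma$ is defined almost everywhere. Consequently $\mathcal{D}(\Sigma)$ is a well-defined finite number, because $\Sigma$ has finitely many bounded complementary regions, each of finite volume.

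For the second claim, $|\mathcal{D}(\Sigma)|$ depends only on $\Sigma$, not on $H$. So it is a lower bound for the set $\{\operatorname{Vol}(H)\}$ as $H$ ranges over admissible homotopies, and by the definition of infimum, $\inf_H\operatorname{Vol}(H)\ge|\mathcal{D}(\Sigma)|$. If no admissible homotopy existed, the infimum would be $+\infty$ and the inequality would hold trivially. In fact one does exist: the straight-line contraction $H(p,t)=(1-t)\iota(p)$ is Lipschitz. However, its finiteness of preimages is not needed for the inequality.

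There is no real obstacle here. The only care required is to confirm that the measure-zero exceptions, namely the terminal point and $\Sigma$, do not affect the integrals in Theorem~\ref{thm:lowbound}. It is also worth noting that the proof there actually gives the stronger bound $\inf_H\operatorname{Vol}(H)\ge\int_{\R^3}|\operatorname{ind}_\Sigma(x)|\,dx$.
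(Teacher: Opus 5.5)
Your proposal is correct and follows the paper's route: the corollary is Theorem~\ref{thm:lowbound} applied to each admissible $H$, followed by passing to the infimum, since $|\mathcal{D}(\Sigma)|$ does not depend on $H$. One aside is unjustified: you only checked that the straight-line contraction $H(p,t)=(1-t)\iota(p)$ is Lipschitz, and finiteness of preimages can fail (for instance if $\Sigma$ contains a segment of a ray from the origin), but as you note, the inequality does not depend on an admissible homotopy existing.
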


\begin{remark}
In Theorem \ref{thm:lowbound} we assume that for every fixed $x \in \R^3$, the preimage $H^{-1}(x)$ is finite. 
In particular, the set of times $T_x$ is finite. 
Thus, the function $F(t) = \operatorname{ind}_{\Sigma_t}(x)$
has only finitely many jump discontinuities and is constant between them.
In particular, $F \in BV([0,1])$.
\end{remark}

\begin{definition} \label{prop:snspresrv}
Let $\iota : S^2 \to \R^3$ be an oriented immersion and let 
$H : S^2 \times [0,1] \to \R^3$ be a Lipschitz homotopy with $\Sigma_t = H_t(S^2)$. 
We say that $H$ is \emph{sense-preserving} if for every regular crossing $H(p,t)=x$, the quantity
\[
\xi_t(p) \equiv \langle \partial_t H(p,t), \nu_t(p) \rangle
\]
where $\nu_t(p)$ is the oriented unit normal to $\Sigma_t$ at the regular intersection point $x$, has the same sign. 
In other words, all sheets of the surface pass through 
points in a consistent direction with respect to the chosen orientation.
\end{definition}

\begin{remark}
At regular points, the sign of $\langle \partial_t H(p,t), \nu_t(p) \rangle$ agrees with the sign of the Jacobian of $H$. Thus, the sense-preserving condition can be viewed as requiring that the map $H$ has a consistent orientation wherever the surface is regular.
\end{remark}

Each surface $\Sigma_t = H_t(S^2)$ inherits its orientation from the orientation of $S^2$ through the map $H_t$. 
At points where the surface intersects itself, different
sheets may pass through the same point in space, but each sheet still has its own well-defined normal direction coming from the sphere.
This lets us compare how the surface moves with its orientation by looking at $\langle \partial_t H(p,t), \nu_t(p) \rangle$, which tells us whether the surface is moving in the positive or negative normal direction.

\begin{proposition} \label{prop:minhom}

Let $\iota$ and $H$ satisfy the assumptions of Theorem~\ref{thm:lowbound}.
Fix $x \in \R^3$ and for all $t$ with $x \notin \Sigma_t$ extend the index function $F$ continuously from the right at each crossing time,
\[
F(t) = \lim_{s \to t^+} \operatorname{ind}_{\Sigma_s}(x),
\qquad \text{whenever } x \in H_t(S^2).
\]
Then $F$ is piecewise constant and right-continuous on $[0,1]$. 
If $H$ is sense-preserving, then
\[
|\operatorname{ind}_{\Sigma}(x)| = \operatorname{Var}(F) = N(x,H).
\]
\end{proposition}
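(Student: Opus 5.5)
The proof follows the structure of the proof of Theorem~\ref{thm:lowbound}, upgrading each inequality there to an equality under the sense-preserving hypothesis.

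\textbf{Step 1: $F$ is piecewise constant and right-continuous.} Fix $x$. As in that proof, the set $T_x=\{t: x\in\Sigma_t\}$ is finite, since $H^{-1}(x)$ is finite. On each component of $[0,1]\setminus T_x$, the value $F(t)=\operatorname{ind}_{\Sigma_t}(x)$ is constant, because $x$ stays in a single region and Proposition~\ref{prop:cableindex} identifies the index with the Brouwer degree. At $t_0\in T_x$, $F$ is defined as the right limit, so $F$ is piecewise constant and right-continuous by construction. At $t_0=1$ we use $F(1)=0$, since the terminal point is excluded or handled by the convention that $H_1$ is constant.

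\textbf{Step 2: the jump at each crossing time.} Write $H(p_1,t_0)=\dots=H(p_m,t_0)=x$ for the preimages at time $t_0$. We compute $F(t_0^+)-F(t_0^-)$ by a local degree computation, viewing $\operatorname{ind}_{\Sigma_t}(x)$ as $d(f_t,B^3,x)$, or equivalently as the cable index of $x$ relative to $\Sigma_t$. Near each $(p_j,t_0)$, the sheet of $\Sigma_t$ through $x$ moves across $x$. This is equivalent to $x$ crossing that sheet in the direction $-\partial_tH$, so by the jump formula from the proof of Proposition~\ref{prop:cableindex} the contribution is $\pm1$, with sign $\operatorname{sign}\xi_{t_0}(p_j)$ up to a fixed global convention. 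Summing the local contributions, which is justified by additivity of degree on small disjoint neighborhoods of the $p_j$, gives
\[
F(t_0^+)-F(t_0^-)=-\sum_{j=1}^m \operatorname{sign}\xi_{t_0}(p_j).
\]

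\textbf{Step 3: no cancellation under sense-preservation.} If $H$ is sense-preserving, all the $\xi_{t_0}(p_j)$ share one sign, and that sign is the same at every crossing time. Two consequences follow. First, $|F(t_0^+)-F(t_0^-)|=m$. Second, all jumps of $F$ have the same sign, so $F$ is monotone. Hence $\operatorname{Var}(F)=\sum_{t_0}m(t_0)=N(x,H)$, and monotonicity gives $\operatorname{Var}(F)=|F(0)-F(1)|=|\operatorname{ind}_\Sigma(x)|$. The case $x\in\Sigma_0$ (a crossing at $t=0$) affects only a null set and can be treated by the same right-limit convention.

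\textbf{Main obstacle.} The main difficulty is Step 2 at non-regular crossings, where $\partial_tH$ or the normal is not defined or $\xi=0$. The sense-preserving definition only constrains regular crossings. My first approach is to argue that for almost every $x$ all crossings are regular: $H$ is Lipschitz, so by Sard-type and area-formula arguments, points where $JH=0$ have image of measure zero, and by the Remark the sign of $\xi$ matches that of $JH$. This is enough for integrating over $x$. For a pointwise statement I would impose regularity of the crossings at $x$ as an implicit hypothesis, which the phrase ``for every regular crossing'' in Definition~\ref{prop:snspresrv} suggests.
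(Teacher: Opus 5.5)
Your proposal is correct and follows the paper's proof step for step. In both, $F$ is constant between the finitely many crossing times, and under sense-preservation the jumps share one sign and have size equal to the number of preimages at that time, so $\operatorname{Var}(F)=N(x,H)$. Monotonicity then gives $\operatorname{Var}(F)=|F(0)-F(1)|=|\operatorname{ind}_{\Sigma}(x)|$.

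Your local-degree justification of the jump, and your caveat about non-regular crossings where $\xi$ vanishes or is undefined, are extra care that the paper's proof omits. One harmless slip: with the paper's conventions, where the positively oriented round sphere has index $1$ inside, an outward-moving sheet raises the index. The jump is therefore $F(t_0^+)-F(t_0^-)=+\sum_{j}\operatorname{sign}\xi_{t_0}(p_j)$, but only the common sign is used.
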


\begin{proof}
Fix $x \in \R^3$ and let
\[
T_x=\{t\in[0,1]:x\in \Sigma_t\}.
\]
We know that this set is finite. 
List its elements as
\[
0<t_1<\cdots<t_m<1.
\]
For $t\notin T_x$, the point $x$ stays in one connected component of
$\R^3\setminus \Sigma_t$. 
Hence the index
\[
F(t)=\operatorname{ind}_{\Sigma_t}(x)
\]
is constant between two consecutive crossing times. With the right-continuous convention at the crossing times, $F$ is piecewise constant and right-continuous on $[0,1]$.

Now assume that $H$ is sense-preserving. 
At each crossing time $t_i$, every sheet passing through $x$ moves through $x$ in the same oriented direction. 
Therefore all contributions to the jump of $F$ have the same sign. 
Thus the size of the jump is exactly the number of preimages of $x$ at that time:
\[
|F(t_i^+)-F(t_i^-)|=
\#\{p\in S^2:H(p,t_i)=x\}.
\]
Summing over all crossing times gives
\[
\operatorname{Var}(F)
=
\sum_{i=1}^m |F(t_i^+)-F(t_i^-)|
=
\sum_{i=1}^m \#\{p\in S^2:H(p,t_i)=x\}
=
N(x,H).
\]
Since $H$ is sense-preserving, the jumps of $F$ all have the same sign, so $F$ is monotone. 
Also, $H_1$ is constant, hence $F(1)=0$, while
\[
F(0)=\operatorname{ind}_{\Sigma}(x).
\]
Therefore
\[
\operatorname{Var}(F)=|F(0)-F(1)|
=
|\operatorname{ind}_{\Sigma}(x)|.
\]
Combining the two equalities gives
\[
|\operatorname{ind}_{\Sigma}(x)|=\operatorname{Var}(F)=N(x,H). \qedhere
\]
\end{proof}

\begin{corollary}

If $H$ is sense-preserving, then for almost every $x \in \R^3$,
\[
|\operatorname{ind}_{\Sigma_0}(x)| = N(x,H).
\]
Consequently,
\[
\operatorname{Vol}(H)
=
\int_{\R^3} N(x,H)\, d\mathcal{H}^3(x)
=
\int_{\R^3} |\operatorname{ind}_{\Sigma}(x)|\, d\mathcal{H}^3(x)
=
|\mathcal{D}(\Sigma)|.
\]
In particular, such a homotopy attains the lower bound of Theorem~\ref{thm:lowbound}.

\end{corollary}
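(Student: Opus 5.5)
The plan is to prove the pointwise identity first and then integrate it over $\R^3$. For the pointwise identity I invoke Proposition~\ref{prop:minhom}. Its hypotheses fail only on the set of points $x$ that lie on some $\Sigma_t$ at a non-regular crossing. These are points where $H(p,t)=x$ but the oriented normal $\nu_t(p)$ is undefined, or where $\partial_t H(p,t)$ is tangent to $\Sigma_t$, so that $\xi_t(p)=0$. At every other point the sense-preserving hypothesis makes all jumps of $F(t)=\operatorname{ind}_{\Sigma_t}(x)$ of one sign and of unit size per preimage. This gives $|\operatorname{ind}_{\Sigma_0}(x)|=\operatorname{Var}(F)=N(x,H)$.

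\textbf{Step 1 (the exceptional set is null).} The main work is to show that the exceptional set $E$ has $\mathcal{H}^3(E)=0$. I would write $E\subset H(Z)$, where $Z=\{(p,t): JH(p,t)=0\}$, together with $H(S^2\times\{1\})$ and the images of the triple and self-intersection loci. The Remark after Definition~\ref{prop:snspresrv} identifies the sign of $\xi_t(p)$ with the sign of $JH$ at regular points, so a degenerate crossing occurs exactly where $JH=0$. By the area formula,
\[
\int_{\R^3}\#\bigl(H^{-1}(x)\cap Z\bigr)\,dx=\int_Z JH=0 ,
\]
so $H(Z)$ is $\mathcal{H}^3$-null. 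The set $H(S^2\times\{1\})$ is a single point. The self-intersection loci of the individual $\Sigma_t$ are images of lower-dimensional sets. Alternatively, these points are already covered by $Z$: at a point where the normal is undefined, $H_t$ fails to be an immersion, and then $JH=0$. This step needs the most care, because Proposition~\ref{prop:minhom} was stated for a fixed $x$ without quantifying which $x$ qualify. I would try the area-formula argument first, since it handles all degeneracies at once.

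\textbf{Step 2 (integration).} Next I integrate the pointwise identity over $\R^3\setminus E$. Using the area formula as in the proof of Theorem~\ref{thm:lowbound}, this gives
\[
\operatorname{Vol}(H)=\int N(x,H)\,dx=\int|\operatorname{ind}_\Sigma(x)|\,dx .
\]

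\textbf{Step 3 (removing the absolute value).} To conclude $\int|\operatorname{ind}_\Sigma|=|\mathcal{D}(\Sigma)|$, I need $\operatorname{ind}_\Sigma$ to have a constant sign almost everywhere. I would get this from monotonicity. Sense-preservation fixes one global sign $\sigma$ for every jump at every $x$. Since $F_x(1)=0$, it follows that $\operatorname{ind}_\Sigma(x)=F_x(0)$ has sign $-\sigma$ or vanishes, for all $x\notin E$. Then $|\operatorname{ind}_\Sigma(x)|=-\sigma\,\operatorname{ind}_\Sigma(x)$ almost everywhere, so
\[
\int|\operatorname{ind}_\Sigma|=\Bigl|\int\operatorname{ind}_\Sigma\Bigr|=|\mathcal{D}(\Sigma)| .
\]
Combining this with Theorem~\ref{thm:lowbound} shows that the lower bound is attained.
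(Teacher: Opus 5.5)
Your proposal is correct and follows the route the paper intends. The paper gives no separate argument: it derives the corollary from the pointwise identity of Proposition~\ref{prop:minhom} together with the area formula $\operatorname{Vol}(H)=\int N(x,H)\,dx$ used in the proof of Theorem~\ref{thm:lowbound}. Your Step~1 and Step~3 make explicit two points the paper leaves unstated. Step~1 is the Sard-type fact that $H(Z)$ is null, where $Z$ should also contain the null set on which $H$ is not differentiable; the self-intersection loci need not be excluded at all. Step~3 reads sense-preservation as imposing one global sign, so that $\operatorname{ind}_\Sigma$ has constant sign and $\int|\operatorname{ind}_\Sigma|=|\mathcal{D}(\Sigma)|$.
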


\subsubsection{A Minimizing Homotopy from an Immersed Ball} \label{subsec:imrsball}

\bigskip

In this subsection we show that, in the special case where the immersed sphere arises as the boundary of an immersed ball, the lower bound is attained.

\begin{proposition} \label{prop:minhomimrs}

Let $\iota : S^2 \to \R^3$ be a smooth immersion, and assume that $\iota$ arises as the boundary of a smooth, orientation preserving immersion
\[
G : B^3 \to \R^3,
\qquad
G|_{S^2} = \iota.
\]
Then there exists a (smooth) null homotopy
\[
H : S^2 \times [0,1] \to \R^3
\]
such that
\[
\operatorname{Vol}(H)
=
\int_{\R^3} \deg(G,x) \, dx.
\]
In particular, the lower bound is attained. Note that we write $\deg(G,x)$ for the degree of $G$ with respect to $x\in G(B^3)\subset\R^3$.
\end{proposition}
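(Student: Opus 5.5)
The plan is to contract the ball radially and push the boundary sphere through $G$. Define
\[
H(p,t) = G\bigl((1-t)p\bigr), \qquad p\in S^2,\ t\in[0,1].
\]
Then $H_0=G|_{S^2}=\iota$ and $H_1\equiv G(0)$ is constant, so $H$ is a smooth null homotopy. It is Lipschitz because $G$ is smooth on the compact ball. The idea is that $H$ is $G$ precomposed with the polar-coordinate map
\[
\Phi:S^2\times[0,1)\to B^3\setminus\{0\},\qquad \Phi(p,t)=(1-t)p .
\]
This map is a diffeomorphism onto the punctured ball. Its Jacobian is $(1-t)^2$, and it reverses the orientation coming from $S^2\times[0,1]$, since increasing $t$ points inward.

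First I compute the swept volume. By the chain rule,
\[
JH(p,t)=|\det DG((1-t)p)|\,(1-t)^2 .
\]
Changing variables via $\Phi$ (the point $t=1$ has measure zero) gives
\[
\operatorname{Vol}(H)=\int_{B^3}|\det DG(z)|\,dz .
\]
Since $G$ is an orientation-preserving immersion, $\det DG>0$ everywhere, so this equals $\int_{B^3}\det DG(z)\,dz$. The area formula then gives
\[
\int_{B^3}\det DG(z)\,dz=\int_{\R^3}\#\bigl(G^{-1}(x)\cap B^3\bigr)\,dx .
\]
For $x\notin G(S^2)$ every preimage is regular with positive sign, so Definition~\ref{def:Brdeg} gives $\#G^{-1}(x)=\deg(G,B^3,x)$. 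The set $G(S^2)=\Sigma$ is a finite union of immersed smooth pieces and so has measure zero. Hence
\[
\operatorname{Vol}(H)=\int_{\R^3}\deg(G,x)\,dx .
\]
Counting points shows the same thing: $H^{-1}(x)$ corresponds bijectively to $G^{-1}(x)\setminus\{0\}$ under $\Phi$, so $N(x,H)=\deg(G,x)$ for almost every $x$. This also verifies the finiteness assumption for almost every $x$.

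Next I connect this to the lower bound. By Proposition~\ref{prop:cableindex}, applied with $f=G$, we have $\deg(G,x)=C(\Omega)=\operatorname{ind}_\Sigma(x)$ on each component $\Omega$. Therefore
\[
\int_{\R^3}\deg(G,x)\,dx=\mathcal D(\Sigma).
\]
Because the degree is nonnegative here, $\mathcal D(\Sigma)=|\mathcal D(\Sigma)|$, and Theorem~\ref{thm:lowbound} shows the bound is attained. The same conclusion follows from Proposition~\ref{prop:minhom}, which needs $H$ to be sense-preserving. For that, note that $\partial_tH=-DG\,p$ and that $\nu_t$ is the image under $DG$ of the normal direction to the sphere of radius $1-t$. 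Since $\det DG>0$, the sign of $\xi_t$ is constant, and negative with the outward-normal convention.

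The main obstacle is the careful orientation bookkeeping: checking that the normal $\nu_t$ induced from $S^2$ via $H_t$ agrees with the pushforward of the radial normal of the shrinking sphere. This is what makes the sign of $\xi_t$ uniform. The degree and measure-zero points above are routine by comparison.
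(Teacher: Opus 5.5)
Your proof is correct and is essentially the paper's argument. The paper identifies $B^3$ with $S^2\times[0,1]$ collapsed at $t=1$, so that $G$ itself serves as the null homotopy, and then uses the area formula together with $\deg(G,x)=\mathcal{H}^0(G^{-1}(x))$; your explicit $H(p,t)=G((1-t)p)$ with Jacobian factor $(1-t)^2$ simply makes that identification precise. (One small slip in your optional sense-preserving remark: the oriented normal to $G(S_{1-t})$ is a positive multiple of $DG^{-T}p$, not $DG\,p$, but since $\langle DG\,p,\,DG^{-T}p\rangle = 1$ the sign of $\xi_t$ is still uniformly negative.)
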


\begin{proof}

\begin{enumerate}
    \item \label{step-1} Every smooth homotopy of the sphere can be written as a smooth map 
    \[H:\partial B^3\times[0,1]\rightarrow\R^3,\] but since we are only interested in null homotopies, \[H(\cdot,1):\partial B^3\rightarrow \{\text{a point}\},\]we need only consider \[H:\left[\{\partial B^3\times[0,1]\}\bmod\{\partial B^3\times\{1\}\}\right]\rightarrow\R^3.\] But, since 
    \[ B^3 = \left[\{\partial B^3\times[0,1]\}\bmod\{\partial B^3\times\{1\}\}\right]\]
    we need only consider continuous maps $G$, $G:\B^3\rightarrow\R^3$.
    \item Let $G$ be the smooth, orientation preserving map such that the immersed sphere, $\iota(\mathcal{S}^2)$, is just $G|_{\partial B^3}$. By Step~\ref{step-1} above, we can identify $G$ with a candidate null homotopy and we can also conclude that 
    \[\deg(G,x) = N(G,x) = \mathcal{H}^0(G^{-1}(x)).\]
    \item Because we know that any other continuous map $\hat{G}$ leaving the boundary fixed has the same degree everywhere off the image of the boundary (i.e., off of $ G|_{\partial B^3} = \iota(\mathcal{S}^2)$) and we know that $N(\hat{G},x) \geq \deg(\hat{G},x)=\deg(G,x)=\mathcal{H}^0(G^{-1}(x))$, 
    \item By the area formula, we conclude 
    \begin{eqnarray*}
    \text{ Volume swept out by $G$} &=& \text{Volume of $G(B^3)$ with multiplicity} \\
         &=& \int_{B^3} JG \;dx \\
         &=&\int_{G(B^3)}  \mathcal{H}^0(G^{-1}(x)) \; dx \\
         &=&  \int_{G(B^3)} \deg(G,x) \; dx  
    \end{eqnarray*}
    and the minimal homotopy sweeps out exactly the integral of the degree as calculated by the cable system. \qedhere
\end{enumerate}
\end{proof}

\subsection{Algorithm for Cable Index Computation} \label{subsec:algorithm}

We now present a linear time algorithm for computing the cable indices of an immersed sphere. 
While the cable index was defined geometrically and identified with the Brouwer degree, the purpose of this section is to show that it can be computed purely combinatorially from a finite cable system.
The algorithm transforms the geometric cable system associated with $\iota(S^2)\subset \R^3$ into a combinatorial cable-word, reduces it using local cancellations and transitive relations, and outputs the signed indices $(n_i,\Omega_i)$ for all bounded regions of $\R^3\setminus \iota(S^2)$. 
This construction encodes the same topological information as the degree formulation and provides a concrete method for evaluating the degree weighted lower bound from Theorem \ref{thm:lowbound}.

\paragraph{Cable-word construction.}

Let $\iota : S^2 \to \R^3$ be a smooth immersion with finitely many pairwise transverse self-intersections, and write
\[
\R^3 \setminus \iota(S^2) = \Omega_\infty \cup \Omega_1 \cup \cdots \cup \Omega_N,
\]
with $\Omega_\infty$ the unbounded component. Choose interior points
\[
p_\infty \in \Omega_\infty, \qquad p_i \in \Omega_i \ (1 \le i \le N),
\]
and for each $i$ fix an oriented cable $\pi_i : [0,1] \to \R^3$ from $p_i$ to $p_\infty$ intersecting $\iota(S^2)$ transversely at regular points.

At each intersection point $q \in \pi_i \cap \iota(S^2)$ let $\nu(q)$ be the oriented unit normal to the immersed sheet and $\dot\pi_i(q)$ the cable tangent. 
Define
\[
\operatorname{sgn}(q)=\operatorname{sign}\langle \dot\pi_i(q),\nu(q)\rangle \in \{+1,-1\}.
\]
The cable index of $\Omega_i$ is
\[
C(\Omega_i)=\sum_{q\in \pi_i\cap \iota(S^2)} \operatorname{sgn}(q).
\]

To compute these indices combinatorially, we encode each crossing symbolically. 
Whenever an oriented cable $\pi_i$ crosses $\iota(S^2)$ from a region $\Omega_k$ into a neighboring region $\Omega_j$, we record the symbol
\[
(\Omega_k,\Omega_j)
\]
if $\operatorname{sgn}=+1$, and its barred version
\[
\overline{(\Omega_k,\Omega_j)}
\]
if $\operatorname{sgn}=-1$.

For each cable $\pi_i$ we form the word
\[
w_i=(\Omega_{i_1},\Omega_{j_1})(\Omega_{i_2},\Omega_{j_2})\cdots(\Omega_{i_{m_i}},\Omega_{j_{m_i}})
\]
listing crossings in order from $p_i$ to $p_\infty$. The collection
\[
W=[w_1,\dots,w_N]
\]
is the cable-word of the immersed sphere.

\paragraph{Reduction rules.}
The cable-word is reduced by two elementary operations.

First, adjacent inverse crossings through the same interface cancel. 
If a cable crosses from $\Omega_i$ to $\Omega_j$ and immediately returns, the two symbols
\[
(\Omega_i,\Omega_j)\,\overline{(\Omega_j,\Omega_i)}
\]
are replaced by the empty word. 
This corresponds to an immediate local backtracking and does not change the algebraic count.

Second, consecutive transitions through adjacent regions combine transitively. 
A sequence
\[
(\Omega_a,\Omega_b)(\Omega_b,\Omega_c)
\]
is replaced by a single composite transition
\[
2(\Omega_a,\Omega_c),
\]
where the coefficient records the sum of the two oriented crossings. 
Opposite orientations cancel in the same way.

Repeated application of these rules within each word $w_i$ collapses successive crossings into a single effective transition. 
Because each reduction step preserves the total signed sum of crossings, the reduced word
\[
w_i' = n_i(\Omega_i,\Omega_\infty)
\]
satisfies
\[
n_i = \sum_{q \in \pi_i \cap \iota(S^2)} \operatorname{sgn}(q) = C(\Omega_i).
\]
Thus, the algorithm produces the correct cable index for every bounded region.

\paragraph{Example.}

Consider the subword
\[
(\Omega_1,\Omega_2)\,\overline{(\Omega_2,\Omega_3)}\,(\Omega_3,\Omega_4)\,
\overline{(\Omega_4,\Omega_5)}\,\overline{(\Omega_5,\Omega_6)}.
\]

The first two symbols represent opposite crossings through $\Omega_2$ and cancel:
\[
(\Omega_1,\Omega_2)\,\overline{(\Omega_2,\Omega_3)} \;\mapsto\; 0(\Omega_1,\Omega_3).
\]

Combining with the next crossing gives
\[
0(\Omega_1,\Omega_3)(\Omega_3,\Omega_4) \;\mapsto\; 1(\Omega_1,\Omega_4).
\]

The last two symbols are both negatively oriented and combine as
\[
\overline{(\Omega_4,\Omega_5)}\,\overline{(\Omega_5,\Omega_6)}
\;\mapsto\; -2(\Omega_4,\Omega_6).
\]

Putting everything together,
\[
(\Omega_1,\Omega_2)\,\overline{(\Omega_2,\Omega_3)}\,(\Omega_3,\Omega_4)\,
\overline{(\Omega_4,\Omega_5)}\,\overline{(\Omega_5,\Omega_6)}
\;\mapsto\;
1(\Omega_1,\Omega_4) - 2(\Omega_4,\Omega_6)
\;\mapsto\;
-1(\Omega_1,\Omega_6).
\]

Hence, the final coefficient is $n=-1$.

\paragraph{Correctness of the reduction.}

We now justify that the reduction algorithm computes the cable index.

We call a cable system
\[
\Pi=\{\pi_i:[0,1]\to\R^3\mid 1\le i\le N\}
\]
\emph{simple} if it satisfies the following:

(i) $\pi_i(0)=p_i\in\Omega_i$ and $\pi_i(1)=p_\infty\in\Omega_\infty$;

(ii) whenever $\pi_i$ passes from a bounded region $\Omega_a$ to a different bounded region $\Omega_b$, it crosses $\iota(S^2)$ exactly once along their common interface, and it never re-enters a bounded region it has already left;

(iii) distinct cables are disjoint except at their common endpoint $p_\infty$.

\begin{proposition} \label{prop:algrm}
Let $\Pi$ be a simple cable system. 
For each bounded region $\Omega_i$, let $w_i$ be the cable word associated with $\pi_i$, and let $w_i'$
be the reduced word obtained by repeatedly applying the cancellation and transitive reduction rules. 
If $w_i' = n_i(\Omega_i,\Omega_\infty)$,
then $n_i = C(\Omega_i)$.
\end{proposition}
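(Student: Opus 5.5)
The approach is to attach a signed integer weight to every symbol of the cable word and show that both reduction rules preserve the total weight of the word. Give a symbol $(\Omega_a,\Omega_b)$ weight $+1$ and a barred symbol $\overline{(\Omega_a,\Omega_b)}$ weight $-1$. A composite symbol $n(\Omega_a,\Omega_c)$ produced by the transitive rule gets weight $n$. For a word $w$, let $\sigma(w)$ be the sum of the weights of its symbols. By construction of $w_i$, each crossing $q\in\pi_i\cap\iota(S^2)$ contributes exactly one symbol, and its weight is $\operatorname{sgn}(q)$. Hence
\[
\sigma(w_i)=\sum_{q\in\pi_i\cap\iota(S^2)}\operatorname{sgn}(q)=C(\Omega_i),
\]
and by Proposition~\ref{prop:cableindex} this value does not depend on the chosen cable.

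Next I will check invariance of $\sigma$ under each rule separately. The cancellation rule deletes a pair $(\Omega_i,\Omega_j)\overline{(\Omega_j,\Omega_i)}$ of total weight $+1-1=0$, so $\sigma$ is unchanged. The transitive rule replaces $m(\Omega_a,\Omega_b)\,k(\Omega_b,\Omega_c)$ by $(m+k)(\Omega_a,\Omega_c)$, with signs absorbed into $m,k$ as in the worked example. The total weight is unchanged by definition of the coefficient. Composite symbols with coefficient $0$ are kept as placeholders, as in the example, so that the chain of region labels is not broken. Since each rule is a local substitution on a finite word, $\sigma$ is invariant under any finite sequence of reductions.

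It then remains to show that repeated reduction always terminates at a single symbol of the form $n_i(\Omega_i,\Omega_\infty)$. I expect this step to need the most care, and it is where simplicity of $\Pi$ is used. The key observation is that consecutive symbols of $w_i$ always chain: the target region of one crossing is the source region of the next. This is because between two crossings the cable stays in a single component of the complement. The first symbol has source $\Omega_i$, since $\pi_i(0)=p_i\in\Omega_i$, and the last has target $\Omega_\infty$, since $\pi_i(1)=p_\infty$. The transitive rule preserves this chaining property and shortens the word by one symbol. Induction on the length then reduces $w_i$ to one symbol $n_i(\Omega_i,\Omega_\infty)$. Condition (ii) guarantees that the labels are unambiguous, each interface being crossed once and no bounded region being revisited, so the word is finite and the bookkeeping of region labels is consistent. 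Condition (iii) ensures that the words $w_i$ can be reduced independently of one another. If the cable has no crossings at all, then $\Omega_i=\Omega_\infty$, which is excluded.

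Finally, combining the two parts gives $n_i=\sigma(w_i')=\sigma(w_i)=C(\Omega_i)$.
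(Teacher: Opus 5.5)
Your proof is correct and follows the paper's argument. The signed weight of $w_i$ equals $C(\Omega_i)$ by construction, the cancellation and transitive rules both preserve it, and so the coefficient of the final symbol $n_i(\Omega_i,\Omega_\infty)$ must be $C(\Omega_i)$. Your extra termination argument via chaining of region labels is not needed, since the statement already assumes $w_i'$ has this form, and it does not actually rely on simplicity (it works for any cable with finitely many transverse crossings at regular points); still, it is correct and makes explicit what the paper takes for granted.
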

\begin{proof}
Fix $i$. 
Each symbol in $w_i$ records one transverse intersection
$q\in\pi_i\cap \iota(S^2)$ together with its sign $\mathrm{sgn}(q)$.
Therefore, the total signed sum of symbols in $w_i$ equals
\[
\sum_{q\in\pi_i\cap\iota(S^2)} \mathrm{sgn}(q)=C(\Omega_i).
\]

The cancellation rule removes a consecutive inverse pair through the same wall, whose two signs sum to zero. 
The transitive rule replaces two consecutive crossings
through adjacent regions by a single composite symbol whose coefficient is the sum of the two signed contributions. 
Hence each reduction step preserves the total signed sum.

After all reductions, $w_i'$ has the form $n_i(\Omega_i,\Omega_\infty)$, so its coefficient
$n_i$ equals the preserved signed sum. 
Thus
\[
n_i=\sum_{q\in\pi_i\cap\iota(S^2)} \mathrm{sgn}(q)=C(\Omega_i). \qedhere
\]
\end{proof}

\begin{remark}
A cable may enter the exterior region $\Omega_\infty$ before its final arrival at $p_\infty$. 
These crossings do not affect the total signed count.
When the cable crosses from a bounded region into
$\Omega_\infty$ and later leaves $\Omega_\infty$ again, the two crossings through the same wall occur with opposite orientations and contribute $+1$ and $-1$ to the algebraic count.

In the cable word this appears as a consecutive pair
\[
(\Omega_a,\Omega_\infty)\,\overline{(\Omega_\infty,\Omega_b)},
\]
whose coefficients sum to zero. 
By the cancellation rule the pair is removed, and the total signed contribution is unchanged. 
\end{remark}

\paragraph{Reduced Cable-Word}
After applying all cancellation and transitive reductions to each
subword $w_i$, we obtain reduced subwords
\[
w_i' = (n_{i_1},\Omega_{i_1}) \cdots (n_{i_k},\Omega_{i_k}),
\]
where each coefficient $n_{i_j}$ is the net signed number of
oriented transitions from $\Omega_{i_j}$ to $\Omega_\infty$
along the cable $\pi_i$.

Collecting all reduced subwords gives the reduced cable-word
\[
\mathcal{W}=(w_1',\dots,w_N')
=(n_1,\Omega_1)\cdots(n_k,\Omega_k),
\]
with
\[
n_i = C(\Omega_i).
\]
Thus, $\mathcal{W}$ records the cable index of every bounded
region in $\R^3\setminus\iota(S^2)$.
Assign to each region $\Omega_i$ the weight $\mathrm{Vol}(\Omega_i)$.
Then the total weighted sum
\[
V_{\deg}(\iota)
=\sum_{i=1}^{N} |n_i|\,\mathrm{Vol}(\Omega_i).
\]
gives a computable lower bound for the swept volume
of any null homotopy of $\iota(S^2)$.

\begin{figure}[ht!]
    \centering
    \includegraphics[width=0.75\linewidth]{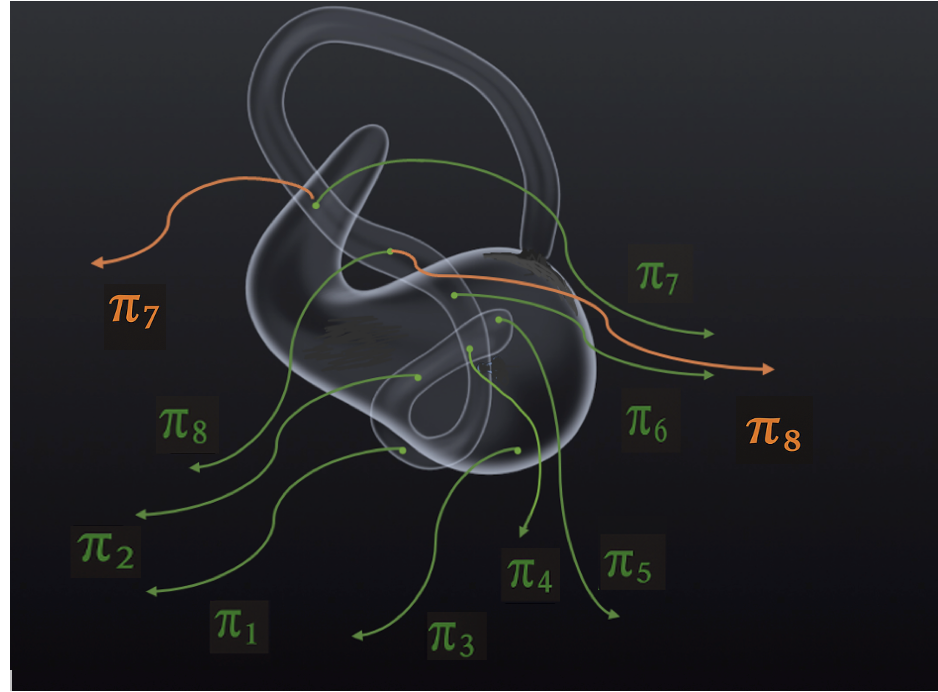}
    \caption{The orange cables represent simple cables.
The green cables illustrate alternative choices: for cable 8 the green path fails to be simple, while for cable 7 both the orange and green versions are simple, although the green one passes through $\Omega_\infty$ earlier than necessary.  
This example shows that different geometric choices of cables may violate the simplicity condition but still produce the same total algebraic index after cancellation.}
    \label{fig:cable_system2}
\end{figure}

\begin{example}
We compute the indices from the green cable system in Figure~5.
Let $\Omega_1,\dots,\Omega_8$ be the bounded regions indicated
by the points $p_1,\dots,p_8$, and let $\pi_1,\dots,\pi_8$
be the green cables.

For each $\pi_i$ we form the cable word $w_i$ by listing,
in order, the oriented crossings with $\iota(S^2)$,
written as symbols $(\Omega_a,\Omega_b)$ whenever the cable
passes from $\Omega_a$ into $\Omega_b$.

\paragraph{Cable $\pi_1$.}
\[
w_1=(\Omega_1,\Omega_\infty).
\]
No reduction is possible, so
\[
w_1'=(\Omega_1,\Omega_\infty),
\qquad
n_1=1=C(\Omega_1).
\]

\paragraph{Cable $\pi_2$.}
\[
w_2=(\Omega_2,\Omega_3)(\Omega_3,\Omega_\infty).
\]
Applying the transitive rule,
\[
(\Omega_2,\Omega_3)(\Omega_3,\Omega_\infty)
\longrightarrow
2(\Omega_2,\Omega_\infty),
\]
so
\[
w_2'=2(\Omega_2,\Omega_\infty),
\qquad
n_2=2=C(\Omega_2).
\]

\paragraph{Cable $\pi_3$.}
\[
w_3=(\Omega_3,\Omega_\infty).
\]
Hence
\[
w_3'=(\Omega_3,\Omega_\infty),
\qquad
n_3=1=C(\Omega_3).
\]

\paragraph{Cable $\pi_4$.}
\[
w_4=(\Omega_4,\Omega_2)(\Omega_2,\Omega_3)(\Omega_3,\Omega_\infty).
\]
First,
\[
(\Omega_4,\Omega_2)(\Omega_2,\Omega_3)
\longrightarrow
2(\Omega_4,\Omega_3),
\]
then
\[
2(\Omega_4,\Omega_3)(\Omega_3,\Omega_\infty)
\longrightarrow
3(\Omega_4,\Omega_\infty).
\]
Thus
\[
w_4'=3(\Omega_4,\Omega_\infty),
\qquad
n_4=3=C(\Omega_4).
\]

\paragraph{Cables $\pi_5,\pi_6$.}
For $i=5,6$,
\[
w_i=(\Omega_i,\Omega_{j(i)})(\Omega_{j(i)},\Omega_\infty).
\]
Applying the transitive rule gives
\[
w_i'=2(\Omega_i,\Omega_\infty),
\qquad
n_i=2=C(\Omega_i).
\]

\paragraph{Cable $\pi_7$.}
\[
w_7=(\Omega_7,\Omega_\infty)
(\Omega_\infty,\Omega_3)
(\Omega_3,\Omega_\infty).
\]
Cancel the detour pair,
\[
(\Omega_7,\Omega_\infty)(\Omega_\infty,\Omega_3)
\longrightarrow 0,
\]
then reduce to obtain
\[
w_7'=(\Omega_7,\Omega_\infty),
\qquad
n_7=1=C(\Omega_7).
\]

\paragraph{Cable $\pi_8$.}
\[
w_8=(\Omega_8,\Omega_3)
(\Omega_3,\Omega_\infty)
(\Omega_\infty,\Omega_3)
(\Omega_3,\Omega_\infty).
\]
Cancel
\[
(\Omega_3,\Omega_\infty)(\Omega_\infty,\Omega_3)
\longrightarrow 0,
\]
then apply the transitive rule to obtain
\[
w_8'=2(\Omega_8,\Omega_\infty),
\qquad
n_8=2=C(\Omega_8).
\]

The coefficients are
\[
(n_1,\dots,n_8)=(1,2,1,3,2,2,1,2).
\]
Hence, the reduced cable-word is
\[
\mathcal{W}
=(1,\Omega_1)(2,\Omega_2)(1,\Omega_3)
(3,\Omega_4)(2,\Omega_5)
(2,\Omega_6)(1,\Omega_7)(2,\Omega_8).
\]

The degree-weighted volume is
\[
V_{\deg}(\iota)
=\sum_{i=1}^{8}|n_i|\,\mathrm{Vol}(\Omega_i).
\]
This provides a computable lower bound for the swept volume
of any Lipschitz null-homotopy of $\iota(S^2)$.
\end{example}

\paragraph{Linear-Time Complexity}

Assume we use a simple cable system. 
Each cable $\pi_i$
intersects the immersed sphere once whenever it passes
between two regions and does not re-enter a region it
has already left. 
Thus the crossings along each cable
are strictly ordered.

Let $k_i$ be the number of crossings along $\pi_i$ and
let
\[
E=\sum_{i=1}^N k_i
\]
be the total number of symbols in all cable words.
Each cable is processed once. 
The reduction scans the symbols of $w_i$ in order. 
For each new symbol, the algorithm compares it with the immediately preceding symbol and applies one local rule:
\[
(\Omega_a,\Omega_b)(\Omega_b,\Omega_a)\longrightarrow \varnothing,
\]
\[
(\Omega_a,\Omega_b)(\Omega_b,\Omega_c)\longrightarrow
2(\Omega_a,\Omega_c),
\]
and the corresponding rules for opposite orientations.
Each step requires constant time. 
No symbol is revisited.
Hence, reducing $w_i$ takes $O(k_i)$ time. 
Summing over all cables gives
\[
T(E)=\sum_{i=1}^N O(k_i)=O(E).
\]
During the reduction of a cable, the algorithm stores only
the current region and its coefficient. 
Thus, the working memory per cable is $O(1)$. 
After all cables are processed, we store the final indices $\{n_i\}_{i=1}^N$, which requires
$O(N)$ space.

Therefore, the algorithm runs in $O(E)$ time with $O(1)$
working memory per cable and $O(N)$ memory for the output.

\section{Discussion}
The cable-system formulation provides a combinatorial way to compute the degree vector
\[
(C(\Omega_1),\dots,C(\Omega_N))
\]
and the associated degree-weighted lower bound
\[
V_{\deg}(\iota)
=\sum_{i=1}^N |C(\Omega_i)|\,\mathrm{Vol}(\Omega_i).
\]
One may define for each region $\Omega_i$ a time-dependent
index function $F_i(t)$ that decreases in unit steps from
$C(\Omega_i)$ to $0$ as $t$ increases from $0$ to $1$.
If there exists a Lipschitz null-homotopy $H$ whose index
functions follow such a monotone evolution, then 
\[
\mathrm{Vol}(H)=V_{\deg}(\iota),
\]
so $H$ is volume minimizing.

It is natural to ask whether such monotone index functions
can always be constructed for a given immersed sphere.

Another direction is to extend the cable-system framework to
higher dimensions. 
For an immersed $n$-sphere in codimension
one, the complement consists of finitely many $(n+1)$-dimensional
regions, each carrying a Brouwer degree. 
It remains to determine whether a higher-dimensional analogue of the cable construction can recover these degrees combinatorially and produce a degree-weighted lower bound for the $(n+1)$-dimensional swept volume of a null-homotopy.

\printbibliography
\end{document}